\documentclass[reqno]{amsart}

\usepackage{amssymb}

\usepackage[cmtip,all]{xy}
\SilentMatrices 

\copyrightinfo{2013}{J. D. Christensen and G. Wang}

\newtheorem{thm}{Theorem}[section]
\newtheorem{lem}[thm]{Lemma}
\newtheorem{pro}[thm]{Proposition}
\newtheorem{cor}[thm]{Corollary}
\newtheorem{conj}[thm]{Conjecture}

\theoremstyle{definition}
\newtheorem{de}[thm]{Definition}
\newtheorem{ex}[thm]{Example}

\theoremstyle{remark}
\newtheorem{rmk}[thm]{Remark}

\numberwithin{equation}{section}

\makeatletter
\def\subsection{\@startsection{subsection}{2}%
  \z@{.5\linespacing\@plus.7\linespacing}{.5\linespacing}%
  {\normalfont\bfseries}}
\makeatother

\let\leq\leqslant
\let\geq\geqslant

\usepackage[letterpaper,margin=1.25in]{geometry}

\newcommand{\dosomething}[1]{\textbf{[#1]}}
\newcommand{\donothing}[1]{}
\newcommand{\Comment}{\donothing}
\newcommand{\Commentson}{\renewcommand{\Comment}{\dosomething}}

\Commentson

\usepackage{verbatim}

\usepackage{enumerate}

\usepackage{tikz}
\tikzstyle{pre}=[<-,shorten <=2pt,shorten >=1.3pt,>=stealth,semithick]
\tikzstyle{post}=[->,shorten >=2pt,shorten <=1.3pt,>=stealth,semithick]
\tikzstyle{dot}=[circle, draw, fill=black!50, inner sep=0pt, minimum width=3pt]
\tikzstyle{X}=[above left=-2.5pt,font=\footnotesize]
\tikzstyle{Y}=[above right=-2.5pt,xshift=-1.8pt,font=\footnotesize]

\newcommand{\Dend}{;}




\usetikzlibrary{matrix}

\newcommand{\st}{\mid}

\newcommand{\stmod}[1]{\mathsf{stmod}(#1)}
\newcommand{\StMod}[1]{\mathsf{StMod}(#1)}
\newcommand{\modu}[1]{\mathsf{mod}(#1)}
\newcommand{\Mod}[1]{\mathsf{Mod}(#1)}

\newcommand{\sfS}{\mathsf{S}}
\newcommand{\sfT}{\mathsf{T}}

\newcommand{\sHom}{\underline{\mathrm{Hom}}}
\newcommand{\Hom}{{\mathrm{Hom}}}
\newcommand{\PHom}{{\mathrm{PHom}}}
\newcommand{\Ext}{{\mathrm{Ext}}}

\newcommand{\thick}[1]{\mathsf{Thick}\langle #1 \rangle}
\newcommand{\thickC}[2]{\mathsf{Thick}_{#1}\langle #2 \rangle}
\newcommand{\loc}[1]{\mathsf{Loc}\langle #1 \rangle}

\newcommand{\len}{\mathrm{len}}

\newcommand{\gel}{\mathrm{gel}}

\newcommand{\up}{{\uparrow}}
\newcommand{\down}{{\downarrow}}
\newcommand{\iso}{\cong}
\newcommand{\lra}{\longrightarrow}

\newcommand{\im}{\mathrm{im}}
\newcommand{\soc}{\mathrm{soc}}
\newcommand{\rad}{\mathrm{rad}}

\newcommand{\B}{\mathcal{B}}

\newcommand{\bP}{\mathbb{P}}

\newcommand{\bS}{\mathbb{S}}

\newcommand{\Z}{\mathbb{Z}}
\newcommand{\cF}{\mathcal{F}}
\newcommand{\cG}{\mathcal{G}}
\newcommand{\cI}{\mathcal{I}}

\newcommand{\cP}{\mathcal{P}}

\newcommand{\blank}{-}

\newcommand{\dfn}[1]{\textbf{#1}}

\newcommand{\xqedhere}[1]{\rlap{\hbox to#1{\hfil\llap{\ensuremath{\qed}}}}}
\usepackage[square, numbers, comma, sort&compress]{natbib}  

\usepackage{ifpdf}
\ifpdf  \usepackage[pdftex,bookmarks=false]{hyperref}
\else   \usepackage[hypertex]{hyperref}
\fi

\newcommand{\Estartt}[1]{\path (#1) node[circle, draw, inner sep=0pt, minimum width=3pt] {}}

\newcommand{\kstartt}[1]{\path (#1) node[dot] {}}

\newcommand{\eraw}[4]{+#3 node(next)[circle, draw, inner sep=0pt, minimum width=3pt] {} +(0,0) edge[#2] node[#4] {#1} (next.center) ++#3}
\newcommand{\enext}[3]{\eraw{#1}{#2}{#3}{#1}}
\newcommand{\eXo}[1]{\enext{}{post}{#1}}
\newcommand{\eYo}[1]{\enext{}{post}{#1}}
\newcommand{\exo}[1]{\enext{}{pre}{#1}}
\newcommand{\eyo}[1]{\enext{}{pre}{#1}}

\newcommand{\eXw}{\eXo{( -1,-1)}}

\newcommand{\exw}{\exo{(  1, 1)}}

\newcommand{\eYw}{\eYo{(  1,-1)}}

\newcommand{\eyw}{\eyo{( -1, 1)}}

\newcommand{\kraw}[4]{+#3 node(next)[dot] {} +(0,0) edge[#2] node[#4] {#1} (next.center) ++#3}
\newcommand{\knext}[3]{\kraw{#1}{#2}{#3}{#1}}
\newcommand{\kXo}[1]{\knext{}{post}{#1}}
\newcommand{\kYo}[1]{\knext{}{post}{#1}}
\newcommand{\kxo}[1]{\knext{}{pre}{#1}}
\newcommand{\kyo}[1]{\knext{}{pre}{#1}}

\newcommand{\kXw}{\kXo{( -1,-1)}}

\newcommand{\kxw}{\kxo{(  1, 1)}}

\newcommand{\kYw}{\kYo{(  1,-1)}}

\newcommand{\kyw}{\kyo{( -1, 1)}}

\begin{document}

\title{Computations of Generating Lengths with GAP}
\author{Gaohong Wang}
\address{Department of Mathematics\\
University of Western Ontario\\
London, ON N6A 5B7, Canada}
\email{gwang72@uwo.ca}


\begin{abstract}
%
%
In this paper, we discuss how to apply GAP to do computations in modular representation theory.
Of particular interest is the generating number of a group algebra, which
measures the failure of the generating hypothesis in the stable module category.
We introduce a computational method to do this calculation and
present it in pseudo-code.
We have also implemented the algorithm in GAP and
managed to do computations of examples that
we were not able to do before.
The computations lead to conjectures on the ghost numbers of the groups $Q_8$ and $A_4$.
\end{abstract}

\maketitle

\tableofcontents

\section{Introduction}
In this paper, we develop new algorithms to do computations in modular representation theory, and
present them in pseudo-code.
We have implemented the code in GAP~\cite{GAP}, which is a system for computational discrete algebra,
building upon the GAP package `reps' developed by Webb and others~\cite{GAP_rep},
and some extra functions written by Christensen that supplement those in the main file.
The code allows us to do computations of examples that
we were not able to do before.
And the computations lead to some conjectures in these examples.


Let $G$ be a finite group, and
let $k$ be a field whose characteristic divides the order of $G$.
We are interested in the generating number of the group algebra $kG$,
which is a numeric invariant of the stable module category $\StMod{kG}$ that
measures the failure of the generating hypothesis.
We will provide more background on the stable module category and the generating number in Section~\ref{se:background}.
Briefly speaking, the stable module category $\StMod{kG}$ is a quotient category of the module category $\Mod{kG}$,
where the projective modules are killed.
The stable module category is a triangulated category, so
we can study the generalised generating hypothesis in $\StMod{kG}$.
This is motivated by the famous conjecture by Peter Freyd in stable homotopy theory, which states that 
if a map between two compact spectra is sent to zero by the stable homotopy group functor,
then the map is null homotopic.
The conjecture is referred to as the generating hypothesis and
is still an open question.
Generalising to a triangulated category $\sfT$
together with a set of distinguished objects $\bS$,
the set of graded functors $[S,\blank]_*$ with $S \in \bS$ is analogous to
the stable homotopy group functor in the sense that
\[\text{if } [S, M]_* = 0 \text{ for all } S \in \bS
\text{ and }M \in \loc{\bS} \text{ , then }M = 0. \]
Here $[\blank, \blank]_*$ denotes the graded hom-sets in $\sfT$.
We say that $\sfT$ satisfies \dfn{the generating hypothesis with respect to $\bS$}
if the functors $[S,\blank]_*$ are faithful on $\thick {\bS}$ for all $S \in \bS$.
See Section~\ref{ss:GH} for the definition of $\thick {\bS}$ and more details .

The generalised generating hypothesis has been studied in various cases,
such as the derived category of a ring $R$ and
the stable module category of a group algebra $kG$.
For the stable module category, we take $\bS = \{ k \}$, hence
$[k,\blank]_* \iso \widehat{H}^*(G, \blank)$ is the Tate cohomology.
It is known that the generating hypothesis fails in $\StMod{kG}$ for most groups~\cite{GH for p,GH split,admit,GH per}.
In this case, we can study the degree to which the generating hypothesis fails, and
this is measured by the \dfn{generating number} of the group algebra.
We call a map in the kernel of Tate cohomology a \dfn{ghost}.
Roughly speaking, we consider the $n$-fold composite of ghosts out of a module $M$, and
the smallest integer $n$ such that each such composite is stably trivial provides
an invariant of $M$ called the \dfn{generating length} of $M$.
The \dfn{generating number} of $kG$ is defined to be the least upper bound of the generating lengths of 
modules in $\thick k$, and one can show that
the generating hypothesis holds in $\StMod{kG}$ if and only if
the generating number of $kG$ is $1$~\cite{GH split}.
This idea is formalised in a projective class, which we discuss in Section~\ref{ss:GH}.
We also show that there are equivalent characterisations of the generating length.
For example, we can consider the $n$-fold composite of \emph{universal} ghosts out of the module $M$, and
the generating length of $M$ is the smallest integer $n$ such that
the $n$-fold composite of universal ghosts out of $M$ is stably trivial.
See Section~\ref{ss:GH} for more details on universal ghosts and generating lengths.

In Section~\ref{se:gel_GAP}, we show how the idea of universal ghosts can be applied
to compute the generating length.
In general, computing the universal ghost involves modules of infinite dimension.
We prove that the generating length is the limit of
a sequence of \emph{unstable} lengths (Corollary~\ref{cor:gl_limit}).
The unstable lengths are computable using only modules of finite dimension.

In Section~\ref{se:Replace},
we introduce an algorithm for replacing a map with an injection,
which is essential for the computation of the universal ghost. 
More precisely, we can replace a map with an injection by adding a projective summand to the codomain of the map.
Since projective modules are isomorphic to zero in $\StMod{kG}$,
this replacement is equivalent to the original map.
The existing code in the extra functions computes the replacement by
adding a free module to the codomain.
As a consequence, the cokernel of the replacement can contain projective summands.
We introduce a new algorithm to do this computation and
implement it in GAP~\cite{GAP_new}.
See Section~\ref{se:GAP_ex} for examples showing that the new method is faster.
The idea is to first replace the free module by a direct sum of indecomposable projective modules.
Then we prove a condition that determines whether we need to add a map $g: M \to P$
to the original map $f: M \to N$.
More precisely, we will replace $f$ by $f + g: M \to N \oplus P$ if
the following condition is satisfied:
\[ \ker(\Hom(S, f+g)) \subsetneq \ker(\Hom(S, f)), \]
where $S = P/ \rad(P)$ is the simple module corresponding to the indecomposable projective module $P$.
Roughly speaking, we are using the fact that the map $f$ is injective
if and only if it is injective on the socle.
This can be determined by a rank computation and
is presented in pseudo-code in Section~\ref{ss:Replace}.
We also show that the method provides an optimal answer in the sense that the replacement is minimal.
This new function \texttt{ReplaceWithInj}, together with the function that computes the (unstable) universal ghost,
allows us to compute the generating length of a module within a finite range.
We present some other functions related to \texttt{ReplaceWithInj} in Section~\ref{se:Replace} as well.
For example, we need to compute the \texttt{Simple} module when
we check the condition displayed above.
And we have a dual function \texttt{ReplaceWithSurj} that replaces a map with a surjection.
We also introduce a new algorithm to compute the projective-free summand of a module in Section~\ref{ss:proj-free}, and
show that the idea in \texttt{ReplaceWithSurj} can be applied to improve the algorithm.

The generating number of a group algebra is studied for a $p$-group in~\cite{Gh in rep, Gh num} and
for a non-$p$-group in~\cite{Gh num II}, 
where theoretical and computational results are given for generating numbers and
their bounds.
In particular, we know that the generating number of $kG$ is finite, provided that
$\thick k$ is contained in the principal block $B_0$.
But when the condition $\thick k = \StMod{B_0}$ fails, we know of no examples
where we can compute the generating number or an upper bound.
It is sometimes not easy to determine whether a single module is contained in $\thick k$.
For example, we consider the group $C_3 \times S_3$ over a field $k$ of characteristic $3$ in Section~\ref{ss:S3C3},
where we compute the generating length of a $k (C_3 \times S_3)$-module and
show that it is in $\thick k$.
We also make computations for the groups $Q_8$ and $A_4$ in Section~\ref{ss:Q8},
providing evidence for the conjectures that the generating number of $Q_8$ is $3$ and
that the generating number of $A_4$ is $2$.

We give a brief summary of the contents of the paper to end the introduction:
In Section~\ref{se:background}, we provide background material for the stable module category and
the generalised generating hypothesis, and
define the generating number of a group algebra.
In Section~\ref{se:gel_GAP}, we introduce an algorithm
for computing the unstable length of a module within a finite range and
prove that the generating length of the module is the limit of the unstable lengths
as the range goes to infinity.
In Section~\ref{se:Replace}, we describe a new algorithm for the function \texttt{ReplaceWithInj} that
replaces a map with an injection and
introduce other related functions.
In Section~\ref{se:GAP_ex}, we present examples of computations with the new code,
showing that the new code is faster,
as well as providing evidence for the conjectures on the generating numbers of the groups $Q_8$ and $A_4$.

\section{Background}\label{se:background}
In this section, we review some background on
modular representation theory and 
introduce some general concepts that will be needed in the rest of the paper.

\subsection{The stable module category}
Let $G$ be a finite group and $k$ be a field whose characteristic divides the order of $G$.
The stable module category $\StMod{kG}$ is a quotient category of the module category $\Mod{kG}$.
The hom-set between two modules $M$ and $N$ in $\StMod{kG}$ is defined by
\[ \sHom(M, N) := \Hom(M, N)/ \PHom(M, N), \]
where $\PHom(M, N)$ consists of stably-trivial maps between $M$ and $N$,
i.e., the maps that factor through a projective module $P$.
Note that projective modules are isomorphic to zero in the stable module category.
To avoid ambiguity, we say that two modules $M$ and $N$ are
\dfn{stably isomorphic} if they are isomorphic in $\StMod{kG}$.
We write $\stmod{kG}$ for the full subcategory of all
finite-dimension modules in $\StMod{kG}$.
Then $\stmod{kG}$ consists of exactly the \dfn{compact} objects $M$ in $\StMod{kG}$
such that the canonical map
\[ \oplus \sHom(M, X_i) \to \sHom(M, \oplus X_i) \]
is an isomorphism for any class of objects $\{ X_i \}$ in $\StMod{kG}$.
Since the regular representation $kG$ is both projective and injective
as a module over itself,
projective and injective modules coincide in $\stmod{kG}$.
It also follows that two modules are stably isomorphic in $\stmod{kG}$
if and only if they have isomorphic projective-free summands.

The stable module category has a triangulation structure which
we now describe.
Then one will see that cohomology groups of $kG$-modules are represented by
hom-sets in the stable module category.
The desuspension $\Omega M$ of a module $M \in \StMod{kG}$ is defined to be the kernel of a surjective map $P \to M$,
where $P$ is a projective module.
Note that $\Omega M$ is well defined up to isomorphism in $\StMod{kG}$.
We write $\Omega^n M$ for the $n$-fold desuspension of $M$.
Dually, we can define $\Sigma M$ by the short exact sequence
$0 \to M \to P \to \Sigma M \to 0$,
where $P$ is a projective (and injective) module.
Now we define the group cohomology and Tate cohomology of a $kG$-module $M$.

\begin{de}
Let $G$ be a finite group and $k$ be a field.
Let \[P_*: \cdots \lra P_2 \lra P_1 \lra P_0 \]
be a projective resolution of the trivial representation $k$.
The $n$-th \dfn{group cohomology} $H^n(G,M)$ of $M$ is defined to be the $n$-th
cohomology of the chain complex $\Hom(P_*, M)$ for $n \geq 0$.

If, instead of a projective resolution, we take a complete resolution 
\[T_*: \cdots \lra P_1 \lra P_0 \xrightarrow{\partial_0} P_{-1} \lra P_{-2} \lra \cdots\]
of $k$, that is,
a doubly infinite exact sequence of projective modules such that
$\im(\partial_0) = k$, then
the $n$-th \dfn{Tate cohomology} $\widehat{H}^n(G,M)$ of $M$ is defined to be the $n$-th
cohomology of the chain complex $\Hom(T_*, M)$ for $n \in \Z$.
We can also replace the trivial module $k$ by an arbitrary $kG$-module $L$ and compute the resolutions $P_*$ and $T_*$ of $L$.
The cohomology of the chain complexes $\Hom(P_*, M)$ and $\Hom(T_*, M)$ of $M$ are denoted by $\Ext^n(L, M)$ and $\widehat{\Ext}^n(L, M)$,
respectively.
\end{de}

It is easy to see that, for $M$ and $L$ in $\StMod{kG}$ and $n \in \Z$,
there is a natural isomorphism
\[ \widehat{\Ext}^n(L, M) \iso \sHom(L, \Sigma^n M) \iso \sHom(\Omega^n L, M).\]
In particular, the Tate cohomology $\widehat{H}^n(G,M)$ of $M$
is represented by the trivial representation $k$
as $\sHom(\Omega^n k, M)$.
Moreover, by usual homological algebra,
$\widehat{\Ext}^1(L, M)$ is equivalent to
the isomorphism classes of extensions between $L$ and $M$.
Hence a short exact sequence $0 \to M \to N \to L \to 0$ corresponds to a map $\delta\in \sHom(L, \Sigma M)$.
This defines a triangle in $\StMod{kG}$:
\[ M \to N \to L \xrightarrow{\delta} \Sigma M, \]
and gives $\StMod{kG}$ a triangulation.
To compute the \dfn{cofibre} of a map $f : M \to N$,
we need to replace $f$ with an injection
that is stably equivalent to it.
For simplicity, we write $f + g$ for the map $M \to N \oplus P$, where
$f: M \to N$ and $g: M \to P$ are maps out of $M$.
If $P$ is projective, then the maps $f$ and $f+g$ are stably equivalent.
Choosing a map $g: M \to P$ such that $f+g$ is injective, then
the cofibre of $f$ in $\StMod{kG}$ is defined to be the cokernel of $f+g$.
Note again that the cofibre is well-defined up to isomorphism in $\StMod{kG}$.
Dually, we can define the fibre of a map $f$.
In Section~\ref{se:Replace}, we will present the pseudo code to compute
the replacement of a map with an injection.

\subsection{The generalised generating hypothesis and projective classes}\label{ss:GH}

In this section, we introduce the generalised generating hypothesis in a triangulated category, and
discuss its relation with a projective class.
Then we show how to apply this idea to $\StMod{kG}$.

Let $\sfT$ be a triangulated category, and
let $\bS$ be a set of distinguished objects in $\sfT$.
We write $[\blank, \blank]$ for hom-sets in $\sfT$.
A full subcategory $\sfS$ of $\sfT$ is said to be \dfn{thick} if it is closed under
suspension, desuspension, retracts, and triangles.
If in addition, $\sfS$ is closed under arbitrary sums, then
it is called a \dfn{localising} subcategory of $\sfT$.
The thick (resp. localising) subcategory generated by $\bS$ is
the smallest thick (resp. localising) subcategory that
contains $\bS$, and is denoted by $\thick{\bS}$ (resp. $\loc{\bS}$).
The set of graded functors $[S,\blank]_*$ with $S \in \bS$ is analogous to
the stable homotopy group functor in the sense that
\[\text{if } [S, M]_* = 0 \text{ for all } S \in \bS
\text{ and }M \in \loc{\bS} \text{ , then }M = 0. \]
But in general, we don't expect that $[S, \blank]_*$ is faithful
when restricted to $\loc {\bS}$, or,
in other words, $[S, \blank]_*$ will detect not only zero objects,
but also zero maps.
However, we can restrict the functors $[S, \blank]_*$ further to $\thick {\bS}$.
We say that $\sfT$ satisfies \dfn{the generating hypothesis with respect to $\bS$}
if the functors $[S,\blank]_*$ are faithful on $\thick {\bS}$ for all $S \in \bS$.
Note that if $\bS$ consists of finitely many compact objects in $\sfT$, then
\[\thick {\bS} = \loc {\bS} \cap \text{compact objects in } \sfT. \]

We write $\cI$ for the intersection of the kernels of $[S, \blank]_*$ for all $S \in \bS$. 
If $\cI$ the zero ideal, then the generating hypothesis holds.
Note that this is a stronger condition than the generating hypothesis.
Nevertheless, when the generating hypothesis fails,
the least integer $n$ such that
$\cI^n$ is zero provides some measurement of the failure of the generating hypothesis, 
where $\cI^n$ is the $n$-th power of the ideal $\cI$ that
consists of composites of $n$-fold maps in $\cI$.
We formalise this idea in the concept of a projective class:

\begin{de}
Let $\sfT$ be a triangulated category. 
A \dfn{projective class} in $\sfT$ consists of a class $\cP$ of objects of $\sfT$
and an ideal $\cI$ of morphisms of $\sfT$ such that:
\begin{enumerate}[(i)]
\item $\cP$ consists of exactly the objects $P$ such that every composite
      $P \to X \to Y$ is zero for each $X \to Y$ in $\cI$,
\item $\cI$ consists of exactly the maps $X \to Y$ such that every composite
      $P \to X \to Y$ is zero for each $P$ in $\cP$.
\item for each $X$ in $\sfT$, there is a triangle $P \to X \to Y \to \Sigma P$ with 
      $P$ in $\cP$ and $X \to Y$ in $\cI$.
\end{enumerate}
\end{de}
\noindent Note that $\cP$ is closed under retracts and arbitrary direct sums.
If $\cP$ (or equivalently $\cI$) is closed under suspension and desuspension,
then we say that the projective class $(\cP, \cI)$ is \dfn{stable}.
The map $X \to Y$ in the third condition is a universal map out of $X$ in $\cI$.
In general, for a class of objects $\cP$,
we can define a nested sequence of classes by
\begin{enumerate}[(i)]
\item $\cP_1 = \cP$, and
\item $X \in \cP_n$ if $X$ is an retract of some object $M$
such that $M$ sits in a triangle
$P \to M \to Q$ with $P \in \cP$ and $Q \in \cP_{n-1}$.
\end{enumerate}
For an object $X$ in $\sfT$, the \dfn{length $\len(X)$} of $X$ with respect to $(\cP, \cI)$
is defined to be the smallest integer $n$ such that $X \in \cP_n$,
if such an $n$ exists.
There is an alternative interpretation of $\len(X)$ using $\cI^n$ by the property of a projective class, which
we state as the next lemma.
By convention, $\cP_0$ consists of all zero objects in $\sfT$ and
$\cI^0$ consists of all maps in $\sfT$.

\begin{lem}[{\cite{Chr}}]\label{le:uni-map}
Let $\sfT$ be a triangulated category, and $(\cP, \cI)$ be a (possibly unstable) projective class in $\sfT$.
Then, for all integers $n \geq 0$, $(\cP_n, \cI^n)$ is a projective class in $\sfT$. 
In particular, the following conditions are equivalent for an object $X$ in $\sfT$:
\begin{enumerate}[(i)]
\item $X$ is in $\cP_n$.
\item Every $n$-fold composite of maps in $\cI$ out of $X$ is zero.
\item The $n$-fold composite of universal maps in $\cI$ out of $X$ is zero.
\end{enumerate}
\end{lem}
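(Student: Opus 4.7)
The plan is to prove the lemma by induction on $n$, building on the projective-class structure of $(\cP, \cI)$ and handling the equivalence of (i)--(iii) as a direct consequence of the resulting projective class.

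The base case $n = 0$ is immediate ($\cP_0$ is zero objects, $\cI^0$ is all maps) and $n = 1$ is the hypothesis. For the inductive step, assume $(\cP_{n-1}, \cI^{n-1})$ is a projective class. To produce the required triangle for an arbitrary $X \in \sfT$, I would first apply property (iii) of $(\cP, \cI)$ to get a triangle $P_1 \to X \to Y_1 \to \Sigma P_1$ with $P_1 \in \cP$ and $X \to Y_1 \in \cI$, and then apply the inductive hypothesis to $Y_1$ to get $Q \to Y_1 \to Y_n \to \Sigma Q$ with $Q \in \cP_{n-1}$ and $Y_1 \to Y_n \in \cI^{n-1}$. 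The composite $X \to Y_1 \to Y_n$ lies in $\cI^n$, and the octahedral axiom applied to $X \to Y_1 \to Y_n$ yields a triangle $P_1 \to P_n \to Q \to \Sigma P_1$ exhibiting the fibre $P_n$ of $X \to Y_n$ as an object in $\cP_n$ by the nested definition of the $\cP_n$.

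Next I would verify the two orthogonality conditions. For $\cP_n \subseteq {}^\perp \cI^n$: given $P \in \cP_n$ sitting in a triangle $P' \to P \to P''$ with $P' \in \cP, P'' \in \cP_{n-1}$, any map $g : P \to X_0$ postcomposed with $f_1 : X_0 \to X_1 \in \cI$ becomes zero after precomposition with $P' \to P$ (by orthogonality for $(\cP, \cI)$), so $f_1 g$ factors through $P''$; then an $(n-1)$-fold composite in $\cI$ out of $P''$ vanishes by induction, giving the full $n$-fold composite is zero. Conversely, if $P$ is orthogonal to every map in $\cI^n$, apply the construction above to $P$ itself: the triangle $P_n \to P \to Y_n$ has $P \to Y_n \in \cI^n$, which by hypothesis is zero, forcing $P$ to be a retract of $P_n$; since $\cP_n$ is closed under retracts by definition, $P \in \cP_n$. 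The symmetric verification for $\cI^n$ proceeds in the same way.

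Once $(\cP_n, \cI^n)$ is known to be a projective class, the equivalences follow: (i) $\Rightarrow$ (ii) is the orthogonality just proved, (ii) $\Rightarrow$ (iii) is trivial since universal maps lie in $\cI$, and (iii) $\Rightarrow$ (i) follows because the composite $X \to Y_n$ of universal maps at each stage is, by construction, a representative of the universal map out of $X$ in $\cI^n$, and its vanishing forces $X$ to split off the fibre $P_n \in \cP_n$ as a retract. The main obstacle I anticipate is the orthogonality bookkeeping in the inductive step --- specifically, arranging the octahedral diagram so that the fibre $P_n$ of the composite genuinely fits the inductive definition of $\cP_n$, and ensuring the retract-closure is used at precisely the right point to upgrade the ``$P$ is a retract of some $P_n$'' conclusion to ``$P \in \cP_n$''.
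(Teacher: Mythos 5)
Your argument is correct: the paper gives no proof of this lemma, citing Christensen's paper instead, and your induction on $n$ --- building the triangle for $X$ by composing the universal map in $\cI$ out of $X$ with the universal map in $\cI^{n-1}$ out of its target, identifying the fibre of the composite via the octahedral axiom as an extension of an object of $\cP_{n-1}$ by an object of $\cP$, and then checking the two orthogonality conditions to deduce (i)--(iii) --- is essentially the argument in that reference. The only point to tighten is the forward orthogonality: an object of $\cP_n$ is by definition only a \emph{retract} of an object $M$ sitting in a triangle $P' \to M \to P''$, so the factorization argument should be run for $M$ and then precomposed with the splitting, a one-line addition you already anticipate in your closing remarks.
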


Now we consider $\StMod{kG}$ and the Tate cohomology functor.
We call a map in the kernel of the Tate cohomology functor a \dfn{ghost} and
write $\cG$ for the ideal of ghosts in $\StMod{kG}$.
Let $\cF$ be the class of objects in $\StMod{kG}$ generated by the trivial representation $k$
under retracts, arbitrary direct sums, suspension and desuspension.
Since the Tate cohomology is represented by $k$,
the pair $(\cF, \cG)$ forms a projective class in $\StMod{kG}$, and
this is called the \dfn{ghost projective class}.
For $M \in \thick k$, the \dfn{generating length} $\gel(M)$ of $M$ is
the length of $M$ with respect to $(\cF, \cG)$.
The generating number of $kG$ is defined to be the least upper bound of $\gel(M)$ for
all $M \in \thick k$.

There is another invariant called the \emph{ghost number} that is more closely related to
the generating hypothesis in $\StMod{kG}$.
In general, the ghost number of $kG$ is less than or equal to the generating number.
But in the examples that we are able to compute,
we have shown them to be equal.
We will focus on the computation of the generating number in this paper.
See~\cite{Gh in rep, Gh num, Gh num II} for further discussions on the difference
between the ghost number and the generating number.

\section{A computational method to calculate the generating length}\label{se:gel_GAP}

By Lemma~\ref{le:uni-map}, the generating length of a module $M$ can be computed using
universal ghosts.
The idea is presented in the following pseudo-code:
\begin{verbatim}
    LengthHelper = function with inputs:
                   a map f from M to N and an integer n
        g = universal ghost from N to L

        if f composed with g is stably trivial then
            return f and n
        return LengthHelper(f composed with g, n+1)

    Length of M = LengthHelper(the identity map on M, 1)
\end{verbatim}

However, computing the universal ghost involves modules of infinite dimension,
unless the Tate cohomology of $M$ is finitedly generated.
Hence, to make this idea work, we need to first consider unstable ghosts within a finite range.
Let $\cF(-m, m)$ be the class 
generated by $\{ \Sigma^i k \st -m \leq i \leq m \} \subseteq \cF$
under retracts and arbitrary direct sums.
Then $\cF(-m, m)$ forms part of a projective class, and
the relative null maps consists of the unstable ghosts within the range
$[-m, m]$.
Given $M \in \thick k$,
we write $\gel_m(M)$ for the length of $M$ with respect to $\cF(-m, m)$.
Since $\cF(-m, m) \subseteq \cF(-m-1, m+1) \subseteq \cdots \subseteq \cF$,
we get a decreasing sequence greater than or equal to $\gel(M)$:
\[\gel_m(M) \geq \gel_{m+1}(M) \geq \cdots \geq \gel(M), \]
where $\gel_m(M)$ can be computed using the pseudo-code presented above for each $m \geq 0$.

\begin{ex}
Let $G$ be a $p$-group and let $k$ be a field of characteristic $p$.
Let $M$ be a projective-free $kG$-module.
Then $\gel_0 (M)$ is
equal to the radical length of $M$~\cite[Proposition~4.5]{Gh num}.
\end{ex}

\begin{ex}	
If the cohomology of $kG$ has periodicity $n$, 
then $\gel(M) = \gel_{\lfloor \frac{n}{2} \rfloor} (M)$ for all $M \in \thick k$, and
the computation of the generating length of $M$ is a finite process.
\end{ex}

Now we want to show that the limit of $\gel_m(M)$ is $\gel(M)$.
This will be a corollary of Lemma~\ref{le:BoVdB}.
We need some more notations before we introduce the lemma.
Let $\sfT$ be a triangulated category with compact objects $\sfT^c$.
Let $\bP$ be a set of compact objects in $\sfT$.
We write $\cP$ for the class of objects generated by $\bP$ under retracts, arbitrary direct sums, suspension, and desuspension and
write $\cP^c$ for the class of objects generated by $\bP$ under retracts, finite sums, suspension, and desuspension.
Note that $\cP^c = \cP \cap \sfT^c$.
More generally, we can define $\cP^c_n := (\cP^c)_n$ in a similarly pattern as $\cP_n$, and
the following lemma holds:
\begin{lem}[{\cite[Proposition 2.2.4]{BoVdB}}]\label{le:BoVdB}
Let $\sfT$ be a triangulated category, and let $\bP$ be a set of compact objects in $\sfT$.
With the notation introduced above,
\[ \cP^c_n = \cP_n \cap \sfT^c. \]
In particular, $\thick {\bP}$ = $\loc {\bP} \cap \sfT^c$.
Moreover, the sequence
\[ \cP^c_1 \subseteq \cP^c_2 \subseteq \cdots \subseteq \cP^c_n \subseteq \cdots \subseteq \thick {\bP}\]
is a filtration of $\thick {\bP}$ with $\thick {\bP} = \bigcup \cP^c_n$.\qed
\end{lem}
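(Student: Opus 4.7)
The plan is to prove the main equality $\cP^c_n = \cP_n \cap \sfT^c$ by induction on $n \geq 0$, and then deduce the ``in particular'' and ``moreover'' statements from it.

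The forward inclusion $\cP^c_n \subseteq \cP_n \cap \sfT^c$ is the easy half. Since $\cP^c \subseteq \cP$, a straightforward induction on $n$, following the recursive definition, shows that $\cP^c_n \subseteq \cP_n$. That $\cP^c_n \subseteq \sfT^c$ also follows by induction, using that $\sfT^c$ is closed under suspension, desuspension, retracts, finite coproducts, and two-out-of-three in triangles.

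The reverse inclusion $\cP_n \cap \sfT^c \subseteq \cP^c_n$ is the substantive half, again handled by induction. For the base case $n=1$, suppose $X \in \cP \cap \sfT^c$ is a retract of $Y = \bigoplus_\alpha \Sigma^{k_\alpha} P_\alpha$ with $P_\alpha \in \bP$. The section $X \to Y$ lands in a finite sub-coproduct by compactness of $X$, so $X$ is a retract of an object of $\cP^c$ and hence lies in $\cP^c_1$. For the inductive step, assume the equality for $n-1$. Let $X \in \cP_n \cap \sfT^c$ be a retract of $M$ with a triangle $P \to M \to Q \to \Sigma P$, where $P \in \cP$ and $Q \in \cP_{n-1}$. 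The goal is to produce a new triangle $P' \to M' \to Q' \to \Sigma P'$ with $P' \in \cP^c$ and $Q' \in \cP^c_{n-1}$ such that $X$ is still a retract of $M'$. The key technical ingredient is that compactness of $X$ allows the composite $X \to M \to Q$ to be replaced by a map factoring through a compact object of $\cP_{n-1}$, which by the inductive hypothesis lies in $\cP^c_{n-1}$; one then builds $M'$ via the octahedral axiom applied to this factorization together with the original triangle. I expect this compact approximation step, extracting a compact object of $\cP^c_{n-1}$ through which the chosen map factors, to be the main obstacle; once it is in place the rest is a formal diagram chase.

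For the consequences, the filtration $\cP^c_1 \subseteq \cP^c_2 \subseteq \cdots$ is immediate since $0 \in \cP^c$ permits trivial extensions. To identify $\bigcup_n \cP^c_n$ with $\thick{\bP}$, I note that $\bigcup_n \cP^c_n$ is closed under suspension, desuspension, retracts, and, via the octahedral axiom, triangles (an extension of an object of $\cP^c_a$ by one of $\cP^c_b$ lands in $\cP^c_{a+b}$); it contains $\bP$; and each $\cP^c_n \subseteq \thick{\bP}$ by induction. An analogous argument, using that $\cP$ is closed under arbitrary coproducts, yields $\bigcup_n \cP_n = \loc{\bP}$. Taking the union of the main equality over $n$ then gives
\[ \thick{\bP} = \bigcup_n \cP^c_n = \bigcup_n (\cP_n \cap \sfT^c) = \Bigl(\bigcup_n \cP_n\Bigr) \cap \sfT^c = \loc{\bP} \cap \sfT^c, \]
which completes the proof.
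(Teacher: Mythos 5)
Your overall strategy is the right one -- it is essentially how Bondal--Van den Bergh prove the statement, which the paper itself does not prove but simply quotes from \cite{BoVdB} -- and your treatment of the easy inclusion, the base case $n=1$, and the filtration $\thick{\bP}=\bigcup_n \cP^c_n$ is fine. But the inductive step for the hard inclusion has a genuine gap: the inductive hypothesis you set up, namely the class equality $\cP^c_{n-1}=\cP_{n-1}\cap\sfT^c$, cannot carry it. The composite $X\to M\to Q$ lands in $Q\in\cP_{n-1}$, which need not be compact, so knowing which \emph{compact} objects lie in $\cP_{n-1}$ says nothing about this map. What you actually need is the stronger statement that every map from a compact object to an object of $\cP_{n-1}$ factors through an object of $\cP^c_{n-1}$. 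That factorization lemma is the entire technical content of the result (it is the key lemma of \cite{BoVdB}); it must itself be proved by induction on $n$, with base case your finite-subcoproduct argument and with an octahedral inductive step that, besides approximating the map to $Q$, also replaces the term $P\in\cP$ by a compact $P'\in\cP^c$ -- for this one applies compactness to the fibre of the approximating map $X\to Q'$, not to $X$ itself, a point your sketch does not address. Asserting that the approximation step follows ``by compactness of $X$'' and deferring it as an expected obstacle leaves the substance of the lemma unproved. Once the factorization lemma is in hand, no modification of the triangle is needed: factor the section $X\to M$ through some $W$ built from $\cP^c$ and $\cP^c_{n-1}$, and compose with the retraction $M\to X$ to exhibit $X$ as a retract of $W$.

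There is a second gap in the ``in particular'' statement. Your chain of equalities uses $\bigcup_n\cP_n=\loc{\bP}$, but this is false in general: each $\cP_n$ is closed under retracts, so if $\bigoplus_i X_i$ lay in some $\cP_N$ then every $X_i$ would too; hence a coproduct of objects of unbounded level lies in no $\cP_n$, and $\bigcup_n\cP_n$ need not be closed under arbitrary coproducts. Your ``analogous argument'' only yields closure under shifts, retracts and triangles, which is not enough for a localising subcategory. The inclusion $\loc{\bP}\cap\sfT^c\subseteq\thick{\bP}$ therefore does not follow formally from the levelwise equality; it requires a separate Neeman-style argument, e.g.\ writing an object of $\loc{\bP}$ as a homotopy colimit of a tower whose stages lie in the classes $\cP_n$ and using compactness to factor the identity of a compact object through a finite stage, after which the levelwise statement (or the factorization lemma) finishes the job.
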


As a corollary, we can compute the generating length in $\stmod{kG}$.
\begin{cor}\label{cor:gl_limit}
Let $G$ be a finite group and $k$ be a field whose characteristic divides the order of $G$.
Let $M$ be a module in $\thick k$. Then
$\gel(M) = \lim_{m \to \infty} \gel_m (M)$.
\end{cor}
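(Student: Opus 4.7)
The plan is to prove the stronger statement that the sequence $\gel_m(M)$ is eventually constant at $\gel(M)$; together with the already-recorded inequalities $\gel_m(M)\ge\gel_{m+1}(M)\ge\gel(M)$ and monotonicity, this yields the limit. Thus the only real content is the reverse inequality: for $m$ sufficiently large, $\gel_m(M)\le\gel(M)$.

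Write $n=\gel(M)$, so that $M\in\cF_n$. The first step is to apply Lemma~\ref{le:BoVdB} to the set $\bP=\{\Sigma^i k\mid i\in\Z\}$ of compact objects in $\StMod{kG}$. Since $\bP$ is already closed under suspension and desuspension, the class $\cP$ of that lemma coincides with $\cF$, and $\cP^c=\cF^c$ is generated by $\bP$ under retracts and finite direct sums. The lemma gives $\cF_n\cap\stmod{kG}=\cF^c_n$, and since $M\in\thick k\subseteq\stmod{kG}$, this places $M$ in $\cF^c_n$.

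The main step is an induction on $n$ proving
\[
\cF^c_n \;=\; \bigcup_{m\ge 0}\cF(-m,m)^c_n,
\]
where $\cF(-m,m)^c$ denotes the class generated by the finite set $\bP_m=\{\Sigma^i k\mid -m\le i\le m\}$ under retracts and finite direct sums. The base case $n=1$ is immediate: an object of $\cF^c$ is a retract of a finite direct sum of elements of $\bP$, which involves only finitely many suspensions and therefore lies in $\cF(-m,m)^c$ for $m$ large enough. For the inductive step, $X\in\cF^c_n$ is a retract of some $Y$ sitting in a triangle $P\to Y\to Q$ with $P\in\cF^c$ and $Q\in\cF^c_{n-1}$; choosing $m$ large enough to accommodate both $P$ (by the base case) and $Q$ (by the inductive hypothesis) puts $P\in\cF(-m,m)^c$ and $Q\in\cF(-m,m)^c_{n-1}$, whence $X\in\cF(-m,m)^c_n$. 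Because $\cF(-m,m)^c\subseteq\cF(-m,m)$ and the inductive construction $\cP\mapsto\cP_n$ is monotone, we conclude $\cF(-m,m)^c_n\subseteq\cF(-m,m)_n$.

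Applying this to our $M$ yields $M\in\cF(-m,m)_n$ for some $m$, i.e.\ $\gel_m(M)\le n=\gel(M)$. Combined with the monotone chain $\gel_m(M)\ge\gel_{m+1}(M)\ge\gel(M)$, we get $\gel_{m'}(M)=\gel(M)$ for all $m'\ge m$, and therefore $\lim_{m\to\infty}\gel_m(M)=\gel(M)$. The only subtle point — and the place where one must be careful — is keeping the finite-sum versus arbitrary-sum variants of the generating classes straight: Lemma~\ref{le:BoVdB} is invoked only for the suspension-closed set $\bP$, while for the truncated sets $\bP_m$ one uses the trivial monotonicity $\cF(-m,m)^c_n\subseteq\cF(-m,m)_n$ instead of a direct application of the lemma.
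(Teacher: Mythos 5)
Your proof is correct and follows essentially the same route as the paper's: apply Lemma~\ref{le:BoVdB} to place $M$ in $\cP^c_n$ with $n=\gel(M)$, observe that building $M$ in $n$ steps from $\cP^c$ only ever touches finitely many suspensions of $k$, and combine with the trivial inequality $\gel(M)\le\gel_m(M)$. The paper compresses the middle step into the remark that ``there are only finitely many spheres $\Sigma^{n_i}k$ needed to build up $M$ in $n$ steps,'' whereas you spell this out as an honest induction on $n$ showing $\cF^c_n=\bigcup_m\cF(-m,m)^c_n$; this is a welcome amplification but not a different argument, and your use of $\bP=\{\Sigma^i k\}$ in place of $\bP=\{k\}$ is an immaterial choice since both generate the same $\cP=\cF$.
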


Since $(\gel_m(M))$ is a sequence of integers, we conclude that $\gel_m (M) = \gel(M)$ for $m$ large.

\begin{proof}
Consider $\bP = \{ k \}$ in $\StMod{kG}$.
Let $M$ be a module in $\thick k$.
It follows from the lemma that $M \in \cP_n^c$, with $n = \gel(M)$.
However, there are only finitely many spheres $\Sigma^{n_i} k$ needed
to build up $M$ in $n$ steps.
Hence there exists an integer $m$, such that
$M \in (\cF(-m, m))_n$, and $\gel_m (M) \leq n = \gel(M)$.
Conversely, since $\cF(-m, m)$ is contained in $\cF$, $\gel(M) \leq \gel_m(M)$.
\end{proof} 


\begin{rmk}
We remark here that there is not a universal choice of $N$ such that $\gel_N(M) = \gel(M)$ for all $M \in \thick k$.
Indeed, if the group cohomology is not periodic, then
$\gel_N( \Omega^n k) = \gel(\Omega^n k)$ if and only if $N \geq |n|$, and
the number $N$ can be arbitrarily large.
Note that the numbers $\gel_n(M)$ give upper bounds of the generating length of $M$.
Hence if a lower bound of the generating length of $M$ is known, we can hope to get the exact answer
of the generating length of $M$.
It would also be interesting to know whether there is a way to
compute lower bounds for the generating length that converge to the correct answer.
\end{rmk}

\section{New algorithms developed for computations in $\StMod{kG}$}\label{se:Replace}
We have improved the GAP code used in the `reps' package~\cite{GAP_rep} to
compute the universal ghost and generating length.
We introduce the function \texttt{ReplaceWithInj} in this section,
which is essential for computing the universal ghost.
We also show the relation of \texttt{ReplaceWithInj} with other functions.

\subsection{The ReplaceWithInj function and the Simple function}\label{ss:Replace}
Recall that the universal ghost is the cofibre of a map that is surjective
on Tate cohomology, and
computing the cofibre depends on a function that replaces a map with an injection
that is stably equivalent to it.
For simplicity, we write $f + g$ for the map $M \to N \oplus P$, where
$f: M \to N$ and $g: M \to P$ are maps out of $M$.
If $P$ is projective, then the maps $f$ and $f+g$ are stably equivalent.
Now let $\{P_i\}$ be the set of non-isomorphic indecomposable projective $kG$-modules, and
let $\B_i$ be a basis for $\Hom(M, P_i)$.
Observe that the natural map
\[ \alpha: M \to \oplus_i(\oplus_{g \in \B_i} P_i)\]
is injective.
Then for any map $f: M \to N$, the map $f + \alpha$ is a replacement of $f$ with an injection.
But in this way, we will have added more maps than we need to the map $f$.
For example, we don't need the maps $g$ with $\ker(f+g) = \ker(f)$.
In fact, we can do better than this and get rid of more maps that we don't want.
We need a lemma before we state the condition that we will put on $g$.

\begin{lem}\label{le:Inj}
Let $f: M \to N$ be a map in $\modu{kG}$.
Then the map $f$ is injective if and only if,
for any simple module $S$, the map
\[ \Hom(S, f): \Hom(S, M) \to \Hom(S,N) \]
is injective.
\end{lem}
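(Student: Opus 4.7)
The plan is to prove the two directions separately, with the forward direction being routine and the reverse direction being the substantive one.

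For the forward direction, if $f: M \to N$ is injective, then the contravariant-into-first-slot functor $\Hom(S, \blank)$ is left exact (as $\Hom(S, \blank)$ always is on $\modu{kG}$), so $\Hom(S, f)$ is injective for every $S$, simple or not. This uses nothing about $S$ being simple.

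For the reverse direction, I would argue by contrapositive. Suppose $f$ is not injective, so that $K := \ker(f)$ is a nonzero submodule of $M$. Since $kG$ is a finite-dimensional (hence Artinian) algebra and $K$ is a finite-dimensional $kG$-module, the socle $\soc(K)$ is nonzero, so $K$ contains a simple submodule $S \subseteq K \subseteq M$. Let $i: S \hookrightarrow M$ denote this inclusion; it is a nonzero element of $\Hom(S, M)$. By construction $f \circ i = 0$ since $i$ factors through $\ker(f)$. Thus $\Hom(S, f)(i) = 0$ with $i \neq 0$, so $\Hom(S, f)$ is not injective, completing the contrapositive.

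The only subtle point — and it is hardly an obstacle in this setting — is the existence of a simple submodule of an arbitrary nonzero module in $\modu{kG}$. This is precisely where finite-dimensionality of $M$ (built into the notation $\modu{kG}$) is used: one invokes the fact that every nonzero module over an Artinian ring has a nonzero socle, which is a direct sum of simple submodules. No further machinery is required.
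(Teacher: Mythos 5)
Your proof is correct and follows essentially the same route as the paper's: the forward direction via left exactness of $\Hom(S,\blank)$ (equivalently $\ker(\Hom(S,f)) \iso \Hom(S,\ker(f))$), and the reverse direction by producing a simple submodule of the nonzero kernel, which is exactly the content the paper compresses into ``the simple modules generate the module category.'' Your contrapositive phrasing and explicit appeal to the nonvanishing socle of a nonzero finite-dimensional module just spell out the same argument in more detail.
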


\begin{proof}
Since $\ker(\Hom(S, f)) \iso \Hom(S, \ker(f))$, the map $f$ being injective implies that
$\Hom(S, f)$ is injective for any $S \in \modu{kG}$.
Conversely, if $\Hom(S, \ker(f)) = 0$ for all simple modules, then
$\ker(f) = 0$
because the simple modules generate the module category.
\end{proof}

It follows from the lemma that
we only need to add to $f$ those maps $g$ that shrink $\ker(\Hom(S, f))$ for some simple module $S$.
Recall that, for each indecomposable projective module $P$,
there is a simple module corresponding to it, given by $P / \rad(P)$:

\begin{lem}[{\cite[Theorem~1.6.3]{Benson}}]\label{le:simple-proj}
Let $P$ be an indecomposable projective $kG$-module.
Then the radical quotient $P / \rad(P)$ is simple and $P / \rad(P) \cong \soc(P)$.
Moreover, the assignment of $P/ \rad(P)$ to $P$ provides a one-one correspondence
between isomorphism classes of indecomposable projective $kG$-modules and
simple $kG$-modules. \qed
\end{lem}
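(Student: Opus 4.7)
The plan is to combine standard Krull--Schmidt theory for finite-dimensional algebras with the fact that $kG$ is a symmetric algebra. First I would observe that since $G$ is finite, $kG$ is a finite-dimensional, hence Artinian and semiperfect, $k$-algebra. Decompose the regular representation as $kG = \bigoplus_i P_i$ into a finite direct sum of indecomposable projective $kG$-modules; by Krull--Schmidt this decomposition is unique up to isomorphism and reordering, and every indecomposable projective occurs as a summand (up to isomorphism).

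Next I would prove that $P/\rad(P)$ is simple and that the assignment $P \mapsto P/\rad(P)$ gives a bijection between isomorphism classes of indecomposable projective and simple $kG$-modules. Since $P$ is indecomposable and $kG$ is semiperfect, $\End_{kG}(P)$ is a local ring; passing to $P/\rad(P)$ and using Nakayama's lemma shows that $P/\rad(P)$ has no nontrivial direct-summand decomposition, hence is simple. Conversely, every simple $kG$-module $S$ admits a projective cover $P \twoheadrightarrow S$ by Nakayama again, and such a cover is indecomposable with $P/\rad(P)\cong S$. Injectivity of the assignment on isomorphism classes follows from the lifting property: an isomorphism $P/\rad P \cong P'/\rad P'$ lifts to a map $P \to P'$ which, after checking that it is surjective (Nakayama) and that the kernel lies in $\rad P$ (minimality of the projective cover), is an isomorphism.

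The key remaining content is the isomorphism $P/\rad(P) \cong \soc(P)$. The plan is to show that $kG$ is a \emph{symmetric} $k$-algebra, with trace form $\beta(a,b)$ equal to the coefficient of $1_G$ in $ab$. Symmetry is the elementary observation that for $g,h \in G$, $gh = 1_G$ iff $hg = 1_G$; non-degeneracy follows because $\{g^{-1} : g\in G\}$ is dual to the basis $\{g : g\in G\}$; associativity is immediate. From symmetry I would deduce that the Nakayama automorphism of $kG$ is the identity (on isomorphism classes of simples), so that for each indecomposable projective $P$ the socle $\soc(P)$ is simple and isomorphic to the head $P/\rad(P)$.

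The main obstacle, and the most substantive step, is the identification $\soc(P)\cong P/\rad(P)$ via the symmetric structure: concretely, one uses the perfect pairing $\beta$ to construct a functorial isomorphism $\Hom_{kG}(\blank, kG) \cong \Hom_k(\blank, k)$ of $kG$-bimodules, which interchanges heads and socles of projective modules. Once this duality is in place, the bijection of Lemma~\ref{le:simple-proj} follows formally, and the rest of the lemma is a consequence of Krull--Schmidt. All other steps are standard finite-dimensional-algebra bookkeeping and can be deferred to the cited source.
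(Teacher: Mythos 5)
Your outline is correct and follows essentially the argument of the cited source: the paper itself gives no proof, quoting the result directly from Benson (Theorem~1.6.3), whose proof likewise rests on Krull--Schmidt/projective-cover theory for the bijection and on the symmetric-algebra structure of $kG$ (the trace form with trivial Nakayama permutation) for the identification $\soc(P)\cong P/\rad(P)$. Nothing further is needed beyond the standard details you defer.
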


Now let $P$ be an indecomposable projective module, and
let $g$ be a map from $M$ to $P$.
We claim that, to decide whether we need to replace $f$ by $f+g$,
it suffices to check the condition
\begin{equation}\label{eq:replace}
\ker(\Hom(S, f+g)) \subsetneq \ker(\Hom(S, f)),
\end{equation}
for $S = P/ \rad(P)$.
Indeed, if $S' \ncong S$ is another simple module, then
$\Hom(S', P) = 0$, and
since $\ker(\Hom(S, f+g)) = \ker(\Hom(S, f)) \cap \ker(\Hom(S, g))$,
there is no need to check $g$ on $S'$.

It follows the discussion above that we can work with one indecomposable projective $P$ at a time.
Observe that if we have replaced $f$ with $f' = f + g$, then
we can replace the condition in Equation~\ref{eq:replace} with
$\ker(\Hom(S, f'+g)) \subsetneq \ker(\Hom(S, f'))$.
Also note that if $\{g_1, g_2, \dots, g_l\}$ is a basis for $\Hom(M, P)$, then
\[ \ker(\Hom(S, \sum_{i=1}^l(g_i))) = \ker(\Hom(S, \alpha)) = 0, \]
where $\alpha: M \to \oplus_i(\oplus_{g \in \B_i} P_i)$ is the injection we started with.
Hence, the following pseudo-code produces a replacement $f'$ of $f$ such that $\ker(\Hom(S, f')) = 0$ for $S = P/ \rad(P)$:
\begin{verbatim}
    ReplaceWithInj = function with one input: a map f from M to N
        P = an indecomposable projective module
        S = the simple module corresponding to P
        for g in a basis for Hom(M, P)
            if ker(Hom(S, f+g)) is strictly contained in ker(Hom(S, f)) then
                replace f with f+g
                continue the loop of g until ker(Hom(S, f)) = 0
        return f
\end{verbatim}
Then, by Lemma~\ref{le:Inj}, we can loop the preceding process
over all indecomposable projective modules and
produce a replacement by an injection.

\begin{rmk}\label{rm:jointly-surj}
Note that the `for' loop of $g$ over $\Hom(M, P)$ can be replaced
any set of maps $\{g_1, g_2, \dots, g_l\}$ in $\Hom(M, P)$, such that
\[ \ker(\Hom(S, \sum_{i=1}^l(g_i))) = 0. \]
In particular, if the injective hull of $M$ has been computed,
we can use it when we compute the replacement of a map $f: M \to N$ with an injection.
\end{rmk}

Now we describe how to check the condition whether
$\ker(\Hom(S, f+g)) \subsetneq \ker(\Hom(S, f))$.
This is done by a rank computation.
We form the map $\beta: \oplus S \to M$, where
the sum ranges over a basis for $\Hom(S, M)$.
Then we compare the dimensions of $\im((f+g) \circ \beta)$
and $\im(f \circ \beta)$ in the diagram
\[
\xymatrix{
                         &                           & N \oplus P \ar[dr] \\
 \oplus S \ar[r]^{\beta} & M \ar[rr]^f \ar[ur]^{f+g} &                    & N.
}
\]
It is clear that $\mathrm{rank}((f+g) \circ \beta) \geq \mathrm{rank}(f \circ \beta)$.
Since $\oplus S$ is semi-simple,
the equality holds if and only if
$\ker(\Hom(S, f+g)) = \ker(\Hom(S, f))$.
In other words, the following conditions are equivalent:
\begin{enumerate}
\item $\ker(\Hom(S, f+g)) \subsetneq \ker(\Hom(S, f))$,
\item $\mathrm{rank}((f+g) \circ \beta) > \mathrm{rank}(f \circ \beta)$.
\end{enumerate}
Note that $\mathrm{rank}(f \circ \beta)$ is at most $\mathrm{rank} (\beta)$, and
this is equivalent to $\ker(\Hom(S, f))=0$, so
we can break out the loop over the basis for $\Hom(M, P)$ when
$\mathrm{rank}(f \circ \beta) = \mathrm{rank}(\beta)$.
We can also check at the same time whether $f$ is injective or not and, if yes,
we return $f$ to avoid the extra loop over the other projective modules.
To conclude the discussion, we display the function ``ReplaceWithInj'' in the following pseudo-code:

\begin{verbatim}
    ReplaceWithInj = function with one input: a map f from M to N
        f = a given map from M to N
        if Rank(f) == dimension of M then   % f is injective
            return N and f
        L = list of non-isomorphic indecomposable projectives
        for P in L
            S = the simple module corresponding to P
            b = map from a sum of S to M, ranging over a basis for Hom(S, M)    
            r = Rank(f composed with b)
            rankb = Rank(b)    
            if r !== rankb then 
                % r not maximal, so need to loop over a basis for Hom(M, P)
                for g in a basis for Hom(M, P)
                    newf = f + g
                    newr = Rank(newf composed with b)
                    if newr > r then
                        f = newf
                        r = newr
                        N = direct sum of N and P
                    if r == rankb then         % r is maximal
                        if Rank(f) == dimension of M then
                            return N and f
                        break out of the loop over the basis for Hom(M, P)
        % This point should never be reached
        return
\end{verbatim}

\begin{rmk}
The code produces an optimal answer in the sense that
the replacement is minimal, unless
the map $f$ itself contains a stably trivial summand,
in which case we need to exclude the summand.
In particular, if $N$ is the zero module, then
we will compute the injective hull of $M$.

To see that the process is optimal,
observe first that $\ker((f+g)\circ \beta) \subseteq \ker(f \circ \beta)$ is the kernel of the composite
\[ \ker(f \circ \beta) \to \oplus S \xrightarrow{\beta} M \xrightarrow{g} P. \]
Since $\ker(f \circ \beta)$ is a direct sum of copies of the simple module $S$ and
$P$ is the corresponding projective module,
the image of this composite is either zero or isomorphic to $S$.
It follows that, when we replace $f$ by $f+g$, we always have
\[\mathrm{rank}((f+g)\circ \beta) = \mathrm{rank}(f \circ \beta) + \dim(S). \]
Thus, to replace a map $f: M \to N$ by an injection,
we need to add exactly \[\frac{\mathrm{rank}(\beta) - \mathrm{rank}(f \circ \beta)}{\dim(S)}\] copies
of the projective module $P$ to $N$, as
our code will do.
Since this number is independent of the choice of a basis for $\Hom(M, P)$,
the code is optimal.
\end{rmk}

Note that the algorithm we introduced depends on a decomposition function to
find all indecomposable projective modules and,
for each indecomposable projective module,
we need to find the corresponding simple module $S = P/ \rad (P)$.

To find $S = P/ \rad (P)$, observe that  by Lemma~\ref{le:simple-proj},
there is a self map on $P$
\[ f: P \to P/ \rad(P) \iso \soc(P) \to P ,\]
with $\im(f) \iso S$.
Hence we can compute the image of \emph{all} self maps on $P$
to find $S$ as the image whose dimension is the smallest, but
this is not very efficient.
So we replace $P$ with $M = \im(f)$, where $f$ is a self map on $P$.
Since $M$ is both a submodule and a quotient module of $P$,
it also satisfies the condition that $M / \rad(M) \iso \soc(M) \iso S$.
Then we can find $S$ as the image of a self map on $M$.
To implement this idea, we can loop over all self maps $f$ on $P$ and
compute $M = \im(f)$.
Then, if $M$ is a proper submodule of $P$,
we replace $P$ with $M$ and make a recursive call and compute the images of self maps on $M$.
The recursion will end with a module $S$ that has no proper submodules.
In other words, $S$ is simple.
Note that if $\Hom(M, M)$ has dimension $1$ and
$M / \rad(M) \iso \soc(M)$, then
the map $M \to M/ \rad(M) \iso \soc(M) \to M$ is an isomorphism, hence
$M$ is simple, and
we can return $M$ in this case.
In conclusion, if $P$ is an indecomposable projective module, then
we can find the corresponding simple module $S$ with the following pseudo-code:
\begin{verbatim}
    Simple = a function with one input: a kG-module P such that
                                        P/rad(P) is isomorphic to soc(P)

        hom = Hom(P, P)
        if hom has dimension 1 then
            return P
        for all maps f in hom
            if 0 < Rank(f) < dimension of P then
                return Simple(im(f))
        % This point can be reached when k is not algebraically closed
        return P
\end{verbatim}

\begin{rmk}
Note that not every simple module $S$ has $\dim(\Hom(M, M)) = 1$
when the field $k$ is small.
So, in general, we have to search over all self maps on $M$.
Also note that, for an arbitrary module $M$,
$\dim(\Hom(M, M)) = 1$ does not imply that $M$ is simple.
For a counterexample, take $G = S_3$, the symmetric group on three letters, and
consider the two dimensional module $M = \widetilde{\Omega} k$,
where the condition $M/ \rad(M) \iso \soc (M)$ fails.
However we have seen that the condition always holds
for the module $M$ that arises in this algorithm.
\end{rmk}



\subsection{Other functions related to ReplaceWithInj}\label{ss:gl_GAP}
In this section, we show the relation of the function \texttt{ReplaceWithInj} 
with other functions.
\begin{enumerate}
\item \texttt{Cofibre} and \texttt{Suspension}.

With the \texttt{ReplaceWithInj} function, we can compute the cofibre of a map $f$.
In particular, replacing the zero map out of $M$,
we get the injective hull of $M$, and
its cofibre is the suspension of $M$.
Since the \texttt{ReplaceWithInj} function provides an optimal answer,
the suspension of $M$ we get is projective-free.
\texttt{Cofibre} is also essential in the \texttt{Length} function,
where we need to compute universal ghosts.

\item \texttt{CreateRandomModule}.

We can create random modules in $\thick k$ using cofibres.
We choose a random map $f: P \to Q$ between random modules $P$ and $Q$ that are sums of suspensions and desuspensions of $k$ and
compute the cofibre $R_1$.
Note that $R_1$ has generating length at most $2$.
Iterating the process $n$-times, we can build up a module $R_n$ of length at most $n+1$.
Note that the function depends on the number of summands that we allow in each step and
the number of steps $n$ that we take.

\item \texttt{IsStablyTrivial}.

Let $f: M \to P$ be an injection of $M$ into a projective module.
Then since $P$ is also injective, every map from $M$ to a projective module
factors through $f$.
Hence \texttt{ReplaceWithInj} provides an algorithm to detect whether
a map $g: M \to N$ is stably-trivial or not,
by checking whether it factors through $f$.

\item \texttt{ReplaceWithSurj}, \texttt{Fibre} and \texttt{Desuspension}.

Since the pseudo-code we present in \texttt{ReplaceWithInj} is dualizable, 
we can write the dual functions \texttt{ReplaceWithSurj},
\texttt{Fibre} and \texttt{Desuspension}.
\end{enumerate}

\subsection{The ProjectiveFreeSummand function}\label{ss:proj-free}

We introduce a new algorithm to compute the projective-free summand of a $kG$-module $M$, and
show that the idea in Section~\ref{ss:Replace} can be applied to improve the algorithm.
The existing code for computing the projective-free summand 
first computes the indecomposable summands of $M$, and then
tests each of these summands and excludes the projective ones.
This consumes more memory and time.
The new algorithm will also need to decompose the regular representation once in order to
find all indecomposable projective $kG$-modules, but
it appears to be significantly faster than the old one.
See the next section for an example that compares the time needed
for the different algorithms for computing the projective-free summand.


Let $f_i: P_i \to M$ be a set of maps that is jointly surjective,
with each $P_i$ being indecomposable and projective, and
let $f: N \to M$ be a map to $M$.
Recall that we write $f + f_i$ for the map $N \oplus P_i \to M$ that is $f$ on $N$ and $f_i$ on $P_i$.
We can compute the projective-free summand of $M$ by the following algorithm:
\begin{verbatim}
    ProjectiveFreeSummand = function with one input: a module M
        f_i = a set of maps from P_i to M that is jointly surjective, with
              each P_i being indecomposable and projective
        f = zero map from zero module to M
        r = 0
        for each f_i
            newf = f + f_i
            newr = Rank(newf)
            if newr == r + dimension of P_i then
                f = newf
                r = newr
        return quotient module of M by the image of f
\end{verbatim}
By construction, the image of $f$ is a summand of $M$ that is projective.
On the other hand, if $P$ is an indecomposable projective summand of $M$,
then there exists some $f_i$ such that $f_i$ maps isomorphically onto $P$.
By induction on $M/P$, one can show that the image of $f$ finally becomes the projective summand of $M$.
The algorithm works for any set of maps $f_i: P_i \to M$ that is jointly surjective with
each $P_i$ being indecomposable and projective.
Since the projective modules $P_i$ are required to be indecomposable,
we need to call the \texttt{Decompose} function here to find them.
And this is the only place that we need to use \texttt{Decompose}.

There are different ways to get the maps $f_i$.
The intuitive idea will be computing a basis for $\Hom(P_i, M)$ for each indecomposable projective $P_i$.
Or we can use the projective cover of $M$ here,
which can be computed by \texttt{ReplaceWithSurj}.
This idea can reduce the number of rank computations,
but we pay the cost of checking more conditions in the loops and doing more matrix multiplications.
However, there will be potential savings in time as we apply this idea and
avoid the unneeded loops.
We have implemented the latter algorithm in GAP and compared it with the existing algorithm.
The results will be presented in the next section.

\section{Examples}\label{se:GAP_ex}
In this section, we give examples of computations with the new code.
We compare the new code with the old code in Section~\ref{ss:compare}, and
show that the new code is faster in computing suspensions and desuspensions.
Then we make computations for the groups $Q_8$ and $A_4$ in Section~\ref{ss:Q8},
providing evidence for the conjectures that the generating number of $Q_8$ is $3$
and that the generating number of $A_4$ is $2$.
And in Section~\ref{ss:S3C3},
we make computations for the group $C_3 \times S_3$, where
$\thick k \neq \StMod{B_0}$.

\subsection{Comparing the new code with the old code}\label{ss:compare}
As a special example of fibres and cofibres,
we begin with an easy computation of suspensions and desuspensions of the trivial representation
for the alternating group $A_4$ over the field $GF(4)$, and
compare the time used by the different versions of the functions \texttt{Suspension} and \texttt{Desuspension}.
We iterate \texttt{Suspension} or \texttt{Desuspension} to compute $\Sigma^{n} k$ and measure the total time used.
\begin{center}
\begin{tabular}{| c | c | c | c | c | }
\hline
$\Sigma^n(k)$ & \multicolumn{2}{|c|}{$n=50$} & \multicolumn{2}{|c|}{$n=-50$}\\\hline
                 & Dimension & Time   & Dimension & Time \\\hline
new function                     & 101        & 5.1s   & 101 & 5.1s \\\hline
old function                     & 109        & 34.8s  & 109 & 31.1s \\\hline
\end{tabular}
\end{center}
Since the old function adds free summands to the target to replace a map with an injection,
and similarly for replacing a map with a surjection,
the replacement we get by using the old code can fail to be minimal for non-$p$-groups,
which produces projective summands in the answer.
In the example, it raises the dimension of $\Sigma^{\pm 50}k$ by $8$.
To get the optimal answer using the old function,
there is an extra step to determine the projective-free summand,
while we have shown that the new algorithm always produces an optimal answer.
It is also clear from the table that the new code is faster than the old code.

Now we compare the time needed for the different versions of \texttt{ProjectiveFreeSummand}
to compute the projective-free summands of $\Sigma^{30} k$ and $\Sigma^{31} k$.
We take smaller modules here to reduce the time for the tests.
Note that the dimensions of $\tilde{\Sigma}^{30} k$ and $\tilde{\Sigma}^{31} k$ are $61$ and $63$, respectively.
We pre-compute the modules $\Sigma^{30} k$ and $\Sigma^{31} k$ with the old function, and
take the result as our models.
The old function returns a module of dimension $61$ for $\Sigma^{30} k$,
which is actually projective-free.
But it returns a module of dimension $71$ for $\Sigma^{31} k$,
which contains a projective summand of dimension $8$.
Indeed, the old function first finds a free cover $f: F \to \Sigma^{30} k$ of $\Sigma k$ and then
computes $\Sigma^{31} k$ as $\ker(f)$.
In our example, $F$ has dimension $132$ and is the minimal free cover of $\Sigma^{30} k$.
Hence, a module of dimension $71$ is the best answer we can get for $\Sigma^{31} k$ using the old function in this case. 
The following table shows the time needed in computing the projective summands of
the pre-computed modules $\Sigma^{30} k$ and $\Sigma^{31} k$:

\begin{center}
\begin{tabular}{| c | c | c | }
\hline
\texttt{ProjectiveFreeSummand} & $\Sigma^{30} k$, dimension $61$ & $\Sigma^{31} k$, dimension $71$\\\hline
                & Time   & Time \\\hline
new function    & 0.11s  & 0.17s \\\hline
old function    & 52.1s  & 71.0s \\\hline
\texttt{Decompose}    & 52.0s  & 70.8s \\\hline
\end{tabular}
\end{center}
Recall that the old \texttt{ProjectiveFreeSummand} function first
decomposes a module into the sum of its indecomposable summands and
then excludes the summands that are projective.
The last line in the table shows the time spent to decompose the module in the old method for
computing the projective-free summand.
It shows that decomposing the module is the dominant part of the old method.
Even for the module $\Sigma^{30} k$, which is projective-free,
it takes a long time for the computer to check with the old code that
it does not contain a projective summand.
One also sees clearly from the table that the new function for computing the projective-free summand is significantly faster.

For a $p$-group, since the regular representation is indecomposable,
the old function generally produces an optimal answer.
But the new \texttt{Suspension} function is still faster in this case, as
one can see in the following table, where
we compute $\Sigma^{\pm 50} k$ for the group $C_3 \times C_3$ over the field $GF(3)$:
\begin{center}
\begin{tabular}{| c | c | c | c | c | }
\hline
$\Sigma^n(k)$ & \multicolumn{2}{|c|}{$n=50$}  & \multicolumn{2}{|c|}{$n=-50$}\\\hline
                 & Dimension & Time   & Dimension & Time \\\hline
new function                     & 226        & 19.2s   & 226 & 19.9s \\\hline
old function                     & 226        & 147.2s  & 226 & 117.5s \\\hline
\end{tabular}
\end{center}
Note that it is not guaranteed by the old algorithm that the answer is going to be optimal,
even for a $p$-group.
Also note that it takes more time for the old function to compute $\Sigma^{50} k$
than to compute $\Sigma^{-50} k$
because the old code needs more time to find an injection from a module $M$ into a free module
in order to compute $\Sigma M$.

\subsection{Computations in $C_9$, $Q_8$, and $A_4$}\label{ss:Q8}
We test our code for the cyclic group $C_9$ of order $9$ with $k = GF(3)$,
the quaternion group $Q_8$ of order $8$ with $k = GF(2)$, and
the alternating group $A_4$ of order $12$ with $k = GF(4)$.
Note that the cohomology of $C_9$ has periodicity $2$ and that
the cohomology of $Q_8$ has periodicity $4$,
so we can compute the generating lengths of $kC_9$ and $kQ_8$-modules exactly in these cases.
Recall that the generating number of $kC_9$ is $4$,
the generating number of $kQ_8$ is $3$ or $4$, and
the generating number of $kA_4$ is $2$, $3$ or $4$~\cite{Gh in rep, Gh num, Gh num II}.
In the following examples, we will create modules using the function \texttt{CreateRandomModule} introduced in Section~\ref{ss:gl_GAP}, and
keep the cofibres $R_n$ with $n \geq 3$,
so that $R_n$ can have lengths greater than or equal to $4$.
Then we compute their generating lengths.

For the group $C_9$, we first record the dimensions and lengths of $R_3$ and $R_4$.
We performed $6$ trials and get

\begin{center}
\begin{tabular}{| c | c | c || c | c || c | c || c | c || c | c || c | c |}
\hline
$n$        & 3 & 4& 3 & 4& 3 & 4& 3 & 4& 3 & 4& 3 & 4 \\\hline
Dimension           & 17 & 22 & 30 & 29 & 17 & 8 & 22 & 15 & 7 & 15 & 7 & 16 \\\hline
Length   &  1 &  2 &  2 &  3 &  1 & 1 &  2 &  2 & 3 &  4 & 2 &  2 \\\hline
\end{tabular}
\end{center}
The process seldom produces a module that achieves that generating number $4$.
But when we take larger $n$, we find see more $kC_9$-modules of length $4$:
\begin{center}
\begin{tabular}{| c | c | c | c | c | c | c | c | c | c | c | c | c | c | c | c |}
\hline
$n$        & 3 & 4& 5 & 6 &7 &8 &9 &10 &11 &12 &13 &14 &15 &16 &17 \\\hline
Dimension         & 22 & 14 & 20 & 19 & 11 & 11 & 11 & 12 & 11 & 19 & 18 & 18 & 8 & 16 & 9 \\\hline
Length            &  2 & 2  & 3  & 4  & 4  & 4  & 4  & 4  & 3  & 3  & 3  & 3  & 3 & 3  & 2 \\\hline
\end{tabular}
\end{center}
It is interesting to note that
the lengths can decrease in a single trial as we take more steps to build up the modules.
Now we repeat the trial many times and check the number of appearances of the modules of
different generating lengths.
The following table is the result we get from a total of $100$ trials:

\begin{center}
\begin{tabular}{| c | c | c | c | c | c | c | c | c | c | c | c | c | c | c | c | c |}
\hline
$n$      & 2 & 3 & 4& 5 & 6 &7 &8 &9 &10 &11 &12 &13 &14 &15 &16 &17 \\\hline
Length = $1$         & 72   & 45 & 30  & 26  & 22  & 14  & 12  & 15  & 11  & 10  & 10  & 10  & 9  & 13 & 12  & 10 \\\hline
Length = $2$         & 28  &  44 & 47  & 34  & 35  & 36  & 38  & 30  & 29  & 29  & 28  & 26  & 25  & 21 & 20  & 23 \\\hline
Length = $3$         & 0  &  11 & 19  & 29  & 22  & 22  & 22  & 25  & 31  & 29  & 30  & 28  & 31  & 24 & 28  & 32 \\\hline
Length = $4$         & 0  &  0 & 4  & 11  & 21  & 28  & 28  & 30  & 29  & 32  & 32  & 36  & 35  & 42 & 40  & 35 \\\hline
\end{tabular}
\end{center}
We can see that, for $n = 2$, we only get modules of lengths less than or equal to $2$, and
similarly for $n = 3$.
As $n$ grows larger, we start to see modules of greater lengths, and
the distribution of modules of different lengths becomes quite steady for $n \geq 10$, which
resembles the behaviour of a Markov chain.
We also see that the modules of top lengths appear at a quite high frequency.

We have performed many more trials for $C_9$ and see this pattern show up again.
But this is a very special example with the group being a cyclic $p$-group.
In general, it is an interesting question to see whether
there is a similar pattern for any finite group.

Now we apply the method to study $kQ_8$-modules.
In this case, we are looking for a $kQ_8$-module of length $4$.
It would imply that the generating number of $kQ_8$ is $4$.
We have tried to build up $kQ_8$-modules with $n$ up to $100$, but
in all the examples, there are no $kQ_8$-modules of length $4$,
strongly suggesting that the generating number of $kQ_8$ is $3$.

\begin{conj}\label{conj:Q_8}
Let $G = Q_8$ and $k$ be a field of characteristic $2$. Then
\[ \text{generating number of } kQ_8 = 3.\]
\end{conj}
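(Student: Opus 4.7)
The plan is to split the conjecture into a lower bound and an upper bound. Since it is already known from~\cite{Gh num, Gh num II} that the generating number of $kQ_8$ is either $3$ or $4$, the lower bound $\geq 3$ is already in the literature (and is visible from the computational examples presented in the text). So the whole content of the conjecture is the upper bound: every $M \in \thick{k} = \stmod{kQ_8}$ satisfies $\gel(M) \leq 3$.

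The first simplification is to use the periodicity of the cohomology of $Q_8$. Since $\widehat{H}^*(Q_8,k)$ has period $4$, one has $\Omega^4 M \iso M$ in $\StMod{kQ_8}$ for every $M$, and Corollary~\ref{cor:gl_limit} (together with the second example just after it) gives $\gel(M) = \gel_2(M)$. Thus the computation of $\gel(M)$ depends only on the finitely many functors $\sHom(\Sigma^i k, \blank)$ with $-2 \leq i \leq 2$, and the universal ghost out of $M$ is the cofibre of a canonical map $M \to \bigoplus_{i=-2}^{2} \Sigma^i k \otimes \sHom(\Sigma^i k, M)$. This replaces the infinite-dimensional universal ghosts of the general setup by a computable finite-dimensional construction, reducing the problem to a statement about finitely generated $kQ_8$-modules and their Tate cohomology in a bounded range of degrees.

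The next step is to exploit the representation theory of $kQ_8$. The algebra $kQ_8$ is tame of dihedral type, and its indecomposable modules are completely classified: up to suspension and desuspension, each falls in a one-parameter family lying in a tube of the stable Auslander--Reiten quiver. The plan is, for each such family, to write down an explicit small triangle $F \to M \to R \to \Sigma F$ with $F \in \cF$ a sum of (de)suspensions of $k$ and with $\gel(R) \leq 2$, thereby certifying $\gel(M) \leq 3$. The periodic $\Omega$-orbit of $k$ is small and accounts for the string modules in a few tubes, and the band modules in the remaining tubes are handled by exhibiting the standard short exact sequences that express them as extensions of suspensions of $k$. Combined with the fact that the generating length is subadditive on triangles (each step builds $\cP_{n+1}$ out of $\cP_n$), one would obtain a uniform bound of $3$.

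The main obstacle is precisely the last step: producing, uniformly over the tubes and over the parameter families, a \emph{single} witness triangle exhibiting $\gel \leq 3$. It is entirely possible that a cleverly chosen band module forces $\gel = 4$, which is what the conjecture denies, and distinguishing these requires genuinely understanding how Tate cohomology in the four degrees $-2, -1, 0, 1, 2$ detects non-triviality of $2$- and $3$-fold ghost composites. A promising way around this is to use the fact, shown in~\cite{Gh num}, that for a $p$-group the generating length of $M$ is bounded above by a quantity built from $\gel_0(M)$ (the radical length) together with corrections coming from the kernel of the Tate-cohomology evaluation map; one would attempt to prove that for $kQ_8$ these corrections always save one from a worst-case bound, pushing $4$ down to $3$. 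The GAP experiments described in Section~\ref{ss:Q8}, which fail to produce any module of length $4$ even for very large $n$, provide the heuristic justification that this saving is always available.
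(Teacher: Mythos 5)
There is no proof to compare against: the statement you were asked to prove is stated in the paper as a \emph{conjecture}, supported only by computational evidence (random modules built with \texttt{CreateRandomModule} for $n$ up to $100$, none of which had generating length $4$). So the question is whether your proposal closes the gap that the paper leaves open, and it does not. Your own write-up concedes the decisive point: after the (correct) reductions --- the lower bound $\geq 3$ and the upper bound $\leq 4$ are known from~\cite{Gh in rep, Gh num, Gh num II}, and periodicity $4$ of $\widehat{H}^*(Q_8,k)$ gives $\gel(M)=\gel_2(M)$, consistent with the example following Corollary~\ref{cor:gl_limit} --- everything hinges on producing, for \emph{every} $M\in\thick{k}=\stmod{kQ_8}$, a triangle certifying $\gel(M)\leq 3$, and you explicitly allow that some module might instead have $\gel(M)=4$. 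That is exactly the open content of the conjecture, so the proposal is a plan plus a restatement of the problem, not an argument.

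The representation-theoretic route you sketch also rests on incorrect input. In characteristic $2$ the algebra $kQ_8$ is tame of \emph{quaternion} type, not dihedral type; it is not a special biserial (string) algebra, so there is no string/band classification of its indecomposables of the kind you invoke, and the ``standard short exact sequences'' expressing band modules as extensions of suspensions of $k$ are not available. Even granting some classification of indecomposables, subadditivity of length over triangles only gives bounds of the form $\gel(M)\leq \gel(F)+\gel(R)$, and nothing in your outline controls why the worst case should be $3$ rather than $4$; the known bound $\gel_0(M)\leq$ radical length from~\cite{Gh num} already yields the upper bound $4$ for $kQ_8$, and the claimed ``correction'' that always saves one is precisely what is unproved. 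So the conjecture remains open after your proposal, just as it does in the paper, where the only justification offered is the empirical absence of length-$4$ modules in extensive GAP searches.
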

For evidence, here is the result when we built up $kQ_8$-modules with $n=10$.
We allowed up to $5$ summands in each step to build up the modules and
performed a total of $200$ trials:
\begin{center}
\begin{tabular}{| c | c | c | c | c | c | c | c |}
\hline
$n$      & 4& 5 & 6 &7 &8 &9 &10 \\\hline
Length = $1$         & 3   & 1 & 1  & 1  & 2  & 0  & 1\\\hline
Length = $2$         & 46   & 36 & 26  & 28  & 19  & 24  & 25\\\hline
Length = $3$         & 151   & 163 & 173  & 171  & 179  & 176  & 174\\\hline
\end{tabular}
\end{center}
We have not included the line with $\text{Length} = 4$, since
we never encountered a $kQ_8$-module with generating length $4$, which
would have disproved the conjecture.

Similarly, we have built up $kA_4$-modules with $n = 10$ and up to $5$ summands.
The modules all have length $2$,
making us believe that the generating number of $kA_4$ is $2$.
\begin{conj}\label{conj:A_4}
Let $G = A_4$ and $k$ be a field of characteristic $2$. Then
\[ \text{generating number of } kA_4 = 2.\]
\end{conj}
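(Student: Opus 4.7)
The plan is to bound the generating number of $kA_4$ both from below and from above. The lower bound is the easier half: the generating hypothesis is known to fail for $kA_4$ (its Sylow $2$-subgroup $V_4$ is neither cyclic nor quaternion and the cohomology is non-periodic, so the results of \cite{GH for p,GH split} apply), producing some $M \in \thick k$ with a non-stably-trivial ghost out of it, and hence $\gel(M) \geq 2$. The substance of the conjecture is the upper bound $\gel(M) \leq 2$ for every $M \in \thick k$.

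My plan for the upper bound is as follows. I would first exploit the semidirect product decomposition $A_4 = V_4 \rtimes C_3$, where $V_4$ is the Sylow $2$-subgroup and $|C_3| = 3$ is coprime to $\mathrm{char}(k) = 2$. Since the index $[A_4 : V_4] = 3$ is invertible in $k$, every $kA_4$-module $M$ is a direct summand of $\mathrm{Ind}_{V_4}^{A_4} \mathrm{Res}_{V_4}^{A_4} M$, so stable maps in $\stmod{kA_4}$ are controlled by those in $\stmod{kV_4}$. Restriction also sends $\thickC{kA_4}{k}$ into $\thickC{kV_4}{k}$, preserving triangles and sending $k$ to $k$. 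The goal is to transfer length information back and forth between $A_4$ and $V_4$, using the extra structure afforded by the $C_3$-action (the characteristic-$2$ group algebra $kA_4$ has only the three $1$-dimensional simples $k, k_\omega, k_{\omega^2}$ over $\mathbb{F}_4$, which should make the description of $\thick k$ tractable).

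Next, I would try to show that for each $M \in \thick k$, the codomain $L$ of the universal ghost $M \to L$ already lies in $\cF$, so that the defining triangle exhibits $M \in \cF_2$ and hence $\gel(M) \leq 2$. By Lemma~\ref{le:uni-map} this is equivalent to showing that every two-fold composite of ghosts out of $M$ is stably trivial. A potential cohomological route is to use the known description of $H^*(A_4; \mathbb{F}_2)$ as the $C_3$-invariants of $H^*(V_4; \mathbb{F}_2) \cong \mathbb{F}_2[x,y]$, together with Carlson's support variety theory, to track how cohomological support behaves across a composite of two ghosts.

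The hard part will be the upper bound. The computational evidence in Section~\ref{ss:Q8} covers only random modules built in at most ten cofibre steps, so a theoretical proof appears to require either (i) a complete description of $\thick k$ inside $\stmod{kA_4}$, possibly via a classification of thick subcategories of the principal block, or (ii) a uniform upper bound on $\gel_m(M)$ independent of both $m$ and $M \in \thick k$, which by Corollary~\ref{cor:gl_limit} would then force $\gel(M) \leq 2$. Producing such a uniform bound, either from the structure of the cohomology ring directly or from a reduction to $kV_4$ that exploits $C_3$-semisimplicity, is in my view the central obstacle, and is what currently leaves the statement at the conjectural stage.
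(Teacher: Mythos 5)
The statement you are addressing is a \emph{conjecture} in the paper, and the paper offers no proof of it. The only support given is computational: in Section~\ref{ss:Q8}, random $kA_4$-modules are built with \texttt{CreateRandomModule} over $GF(4)$ in up to ten cofibre steps with up to five summands per step, and every module encountered had generating length at most $2$; combined with the already-known bounds placing the generating number of $kA_4$ in $\{2,3,4\}$, this is offered as evidence, not demonstration. You correctly recognise this and present a strategy rather than a proof, so there is no ``paper's proof'' to compare against, only the evidence and the open problem.

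Two points in your upper-bound sketch deserve caution. First, restriction to $V_4$ does \emph{not} send ghosts to ghosts. Over $GF(4)$ the induced module $k\!\uparrow^{A_4}_{V_4}$ splits as $k \oplus k_\omega \oplus k_{\omega^2}$, so by adjunction $\widehat{\Ext}^*_{V_4}(k, M\!\downarrow) \cong \widehat{\Ext}^*_{A_4}(k, M) \oplus \widehat{\Ext}^*_{A_4}(k_\omega, M) \oplus \widehat{\Ext}^*_{A_4}(k_{\omega^2}, M)$; a map vanishing on the first summand (a ghost over $A_4$) can be nonzero on the others, hence nonzero on Tate cohomology after restriction. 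This is precisely why the naive index-coprime transfer to $V_4$ does not by itself pin the answer down to $2$ and why the literature leaves a gap. Second, your target of showing that the codomain $L$ of the universal ghost out of $M$ already lies in $\cF$ is strictly stronger than $\gel(M) \leq 2$: by Lemma~\ref{le:uni-map} one only needs the composite of two consecutive universal ghosts out of $M$ to vanish, and this can happen without $L \in \cF$. Aiming for the stronger statement risks a failed approach even if the conjecture is true. As matters stand, both the conjecture and your proposed route to it remain open, which is consistent with the paper's presentation.
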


\subsection{The group $C_3 \times S_3$ at the prime $3$}\label{ss:S3C3}

We know from~\cite[Theorem~4.7]{Gh num II} that 
if the thick subcategory $\thick k$ generated by the trivial representation $k$
in $\stmod{kG}$ consists of all the modules in the principal block,
that is,
\begin{equation}\label{eq:thick=B0}
\thick k = \stmod {B_0},
\end{equation}
then the ghost number of the group algebra $kG$ is finite.
In general, when condition~\ref{eq:thick=B0} fails,
we don't know whether the ghost number of $kG$ is finite or not.
In this case, we can show that a module $M$ is in $\thick k$ by
showing that it has finite generating length.
We make computations for the group $C_3 \times S_3$ in this section, where
condition~\ref{eq:thick=B0} fails.

Let $G = C_3 \times S_3$ be the direct product of
the cyclic group $C_3$ of order three and the symmetric group $S_3$ on three letters.
Let $k$ be a field of characteristic $3$.
We write $x$ for a generator of $C_3$, $y = (1 2 3)$ for an element of order $3$ in $S_3$ and
$z = (1 2)$ for an element of order $2$ in $S_3$.
Thus $G$ is a group on three generators $x$, $y$, and $z$
subject to the relations $x^3=y^3=z^2=1$, $xy=yx$, $xz=zx$, and $yz=zy^2$.

There are two simple $kG$-modules $k$ and $\epsilon$. 
Here $k$ is the trivial representation and
$\epsilon$ is a $1$-dimensional module with $z$ acting as $-1$.
Since the principal idempotent of $kG$ is $1$~\cite{B}, both $k$ and $\epsilon$ are
in the principal block.
We will show in a moment that the simple module $\epsilon$ is not in $\thick k$, hence
$\thick k \neq \stmod {B_0}$.
By Lemma~\ref{le:simple-proj}, the modules $k$ and $\epsilon$
correspond to the indecomposable projective modules sketched below:
\begin{center}
\begin{tikzpicture}[scale=0.6]

\kstartt{3,1}
\knext{X}{post}{(-1,-1)}
\kXw
\eYw \kYw
\kxw \kxw
\eyw 
\knext{Y}{pre}{(-1,1)}
\kXw \eYw \kYw \kxw
\eyw \eXw \eXw
\Dend

\Estartt{10,1}
\enext{X}{post}{(-1,-1)}
\eXw
\kYw \eYw
\exw \exw
\kyw 
\enext{Y}{pre}{(-1,1)}
\eXw \kYw \eYw \exw
\kyw \kXw \kXw
\Dend
\end{tikzpicture}.
\end{center}
Here we use a solid dot for $k$ and a circle for $\epsilon$.
The arrows down-left indicate the action of $X = 1-x$, and
the arrows down-right indicate the action of $Y = y-y^2$.
Note that $Xz =zX$ and $Yz = -zY$.

With an abuse of notation, we write $\epsilon$ for both of its restrictions to $C_3 \times C_2$ and $S_3$.
Restricting to $C_3 \times C_2$, one easily sees that
$\epsilon$ is not in the principal block of $k(C_3 \times C_2)$,
hence cannot be in $\thickC {C_3 \times C_2} k$.
Since the restriction functor is triangulated, it follows that $\epsilon$ is not in $\thickC G k$,

More generally, we know that there are only $6$ indecomposable $k(C_3 \times C_2)$-modules:

\begin{center}
\begin{tikzpicture}[scale=0.6]
\kstartt{0,1}
\Dend

\kstartt{3,1}
\knext{}{post}{(0,-1)}
\Dend

\kstartt{6,1}
\knext{}{post}{(0,-1)}
\knext{}{post}{(0,-1)}
\Dend

\Estartt{9,1}
\Dend

\Estartt{12,1}
\enext{}{post}{(0,-1)}
\Dend

\Estartt{15,1}
\enext{}{post}{(0,-1)}
\enext{}{post}{(0,-1)}
\Dend

\end{tikzpicture}.
\end{center}
Again we use a solid dot for $k$ and a circle for $\epsilon$, and
the arrows downward indicate the action of $X = 1-x$.
It is clear that the first three modules are in $\thickC {C_3 \times C_2} k$.
We know that $\epsilon$ is not in $\thickC {C_3 \times C_2} k$, and
the fifth module is isomorphic to $\Omega \epsilon$ in $\stmod{k(C_3 \times C_2)}$, hence
is not in $\thickC {C_3 \times C_2} k$ either.
The last module is projective as a $k(C_3 \times C_2)$-module, hence is in $\thickC {C_3 \times C_2} k$.
Now we can deduce the following proposition.

\begin{pro}\label{pro:S3C3}
Let $G = C_3 \times S_3$ and let $k$ be a field of characteristic $3$.
Let $M$ be a $kG$-module.
If $M$ is in $\thick k$, then the modules

\begin{center}
\begin{tikzpicture}
\Estartt{7,1}
\Dend

\Estartt{9,1}
\enext{}{post}{(0,-1)}
\Dend
\end{tikzpicture}
\end{center}
cannot be summands of $M \down_{C_3 \times C_2}$.\qed
\end{pro}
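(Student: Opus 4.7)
The plan is to exploit the fact that restriction $\down_{C_3 \times C_2}$ is a triangulated functor that sends the trivial representation to the trivial representation, so it maps $\thickC{G}{k}$ into $\thickC{C_3 \times C_2}{k}$. Combined with the preceding observations about which of the six indecomposable $k(C_3 \times C_2)$-modules lie in $\thickC{C_3 \times C_2}{k}$, the result should follow by a retract argument.

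In more detail, I would first note that $k\down_{C_3 \times C_2} = k$, and that the restriction functor $\stmod{kG} \to \stmod{k(C_3 \times C_2)}$ is exact and triangulated, and commutes with direct sums and retracts. Consequently, it carries the thick subcategory generated by $k$ on the $G$-side into the thick subcategory generated by $k$ on the $(C_3 \times C_2)$-side. Hence, for any $M \in \thick k$, we have $M\down_{C_3 \times C_2} \in \thickC{C_3 \times C_2}{k}$.

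Next, I would recall from the discussion just above that $\epsilon$ lies outside the principal block of $k(C_3 \times C_2)$, so $\epsilon \notin \thickC{C_3 \times C_2}{k}$, and that the two-dimensional module with two $\epsilon$-composition factors is $\Omega \epsilon$ in $\stmod{k(C_3 \times C_2)}$, which is therefore also not in $\thickC{C_3 \times C_2}{k}$ (otherwise applying $\Sigma$ would put $\epsilon$ there). To conclude, suppose for contradiction that one of these two modules were a summand of $M\down_{C_3 \times C_2}$. Since $\thickC{C_3 \times C_2}{k}$ is closed under retracts, this summand would itself lie in $\thickC{C_3 \times C_2}{k}$, contradicting what was just established.

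There is no real obstacle here; the only step that requires a sentence of care is the observation that restriction preserves the thick subcategory generated by $k$, which is a standard consequence of restriction being triangulated, sum-preserving, and sending $k$ to $k$. Everything else is bookkeeping using the explicit classification of indecomposable $k(C_3 \times C_2)$-modules already displayed.
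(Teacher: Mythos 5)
Your proof is correct and follows essentially the same route as the paper: restriction to $C_3 \times C_2$ is triangulated and sends $k$ to $k$, hence carries $\thickC{G}{k}$ into $\thickC{C_3 \times C_2}{k}$; since $\epsilon$ and $\Omega\epsilon$ lie outside the principal block of $k(C_3 \times C_2)$ they are not in $\thickC{C_3 \times C_2}{k}$, and closure under retracts finishes the argument. This is precisely the discussion the paper uses to justify the proposition (stated there with an immediate $\qed$).
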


Conversely, we can view the $k(C_3 \times C_2)$-modules as $kG$-modules with trivial $y$-action.
Again, it is easy to see that the first three modules listed above are in $\thickC G k$.
One also sees that the three-dimensional modules in the list are induced up from the subgroup $S_3$,
as $k \up^G$ and $\epsilon \up^G$.
Since $\Omega^2 k \iso \epsilon$ in $\stmod{kS_3}$, 
the last module $\epsilon \up^G$is a double suspension of the third one $k \up^G$ in $\stmod{kG}$, hence
is in $\thickC G k$ too.
But the other two modules are not in $\thickC G k$ by Proposition~\ref{pro:S3C3}.
We conjecture that the converse of the proposition is also true.
In the following example, we construct a module $M$ that satisfies the condition
in Proposition~\ref{pro:S3C3} and
show that it is in $\thickC G k$.
Indeed, this is equivalent to showing that the generating length of $M$ is finite
by Lemma~\ref{le:BoVdB}.
 
\begin{ex}\label{ex:S3C3}
We consider the cokernel $M$ of the non-zero map $f$
\[
\begin{tikzpicture}[scale=0.6]
\Estartt{0,1}
\Dend 
\draw[->,semithick] (1.5,1) -- (4,1);
\Estartt{5.5,1}
\exw
\exw
\kYw
\eYw
\Dend
\end{tikzpicture}
\]
that sends $\epsilon$ to the difference of the bottom elements.
By Proposition~\ref{pro:S3C3}, the domain and codomain of $f$ are not in $\thickC G k$.
But $M \down_{C_3 \times C_2}$ is in $\thickC {C_3 \times C_2} k$.
We can compute the generating length of $M$
(more precisely, an upper bound of the generating length of $M$)
with the \texttt{Length} function, and show that
\[ M \text{ is in } \thickC G k. \]
The $\texttt{Length}$ function tells us that $\gel_3(M) = 3$, and 
it follows that $\gel(M) \leq 3$.
Now we actually show that $\gel(M) = 3$.
To compute the lower bound, we consider left multiplication by the central element $1-x$
on $M$.
Restricting to $C_3 \times C_3$, we know that $1-x$ is a ghost and
$(1-x)^2$ is stably non-trivial.
Then, by Theorem~3.2 in~\cite{Gh num II}, $1-x$ is a simple ghost, hence a ghost, on $M$. 
Since the restriction functor to the Sylow $p$-subgroup is faithful, 
the generating length of $M$ is at least $3$.
\end{ex}

\end{document}